\numberwithin{equation}{section}
\newtheorem{thm}{Theorem}[section]
\newtheorem{theorem}{Theorem}[section]
\def\Diff{\mathop{\rm Diff}\nolimits}
\def\GL{\mathop{\rm GL}\nolimits}
\def\SO{\mathop{\rm SO}\nolimits}
\def\Id{\mathop{\rm Id}\nolimits}
\def\det{\mathop{\rm det}\nolimits}
\def\log{\mathop{\rm log}\nolimits}
\def\Tr{\mathop{\rm Tr}\nolimits}
\def\Cb{{\mathbb C}}
\def\Nb{{\mathbb N}}
\def\Rb{{\mathbb R}}
\def\Zb{{\mathbb Z}}
\def\Fc{{\mathcal F}}
\def\Hc{{\mathcal H}}
\def\Ic{{\mathcal I}}
\def\Uc{{\mathcal U}}
\def\Qc{{\mathcal Q}}
\def\a{\alpha}
\def\d{\delta}
\def\e{\epsilon}
\def\D{\Delta}
\def\g{\gamma}
\def\G{\Gamma}
\def\lb{\lambda}
\def\om{\omega}
\def\Om{\Omega}
\def\s{\sigma}
\def\ve{\varepsilon}
\def\vp{\varphi}
\def\Gb{{\bf G}}
\def\fl{\forall}
\def\ify{\infty}
\def\ot{\otimes}
\def\bot{\bar\otimes}
\def\ra{\rightarrow}
\def\p{\partial}
\def\0D{\Delta^{(0)}}
\def\1D{\Delta^{(1)}}
\newcommand{\Fa}{\mathfrak{a}}
\newcommand{\Fg}{\mathfrak{g}}
\newcommand{\Fh}{\mathfrak{h}}
\newcommand{\Fl}{\mathfrak{l}}
\def\Gb{{\bf G}}
\def\0b{{\bf 0}}
\def\1b{{\bf 1}}
\def\build#1_#2^#3{\mathrel{
\mathop{\kern 0pt#1}\limits_{#2}^{#3}}}
\def\one{{\bf 1}}
\def\a{\alpha}
\def\d{\delta}
\def\e{\epsilon}
\def\g{\gamma}
\def\lb{\lambda}
\def\om{\omega}
\def\s{\sigma}
\def\ve{\varepsilon}
\def\vp{\varphi}
\def\D{\Delta}
\def\G{\Gamma}
\def\Om{\Omega}
\def\fl{\forall}
\def\ify{\infty}
\def\ot{\otimes}
\def\part{\partial}
\newcommand{\ie}{{\it i.e.\/}\ }
\newcommand{\cf}{{\it cf.\/}\ }
\def\ra{\rightarrow}
\def\text{\hbox}
\def\fl{\forall \,}
\def\ify{\infty}
\def\ot{\otimes}
\def\ra{\rightarrow}
\def\Diff{\mathop{\rm Diff}\nolimits}
\def\GL{\mathop{\rm GL}\nolimits}
\def\SO{\mathop{\rm SO}\nolimits}
\def\Id{\mathop{\rm Id}\nolimits}
\def\cutint{{\int \!\!\!\!\!\! -}}
\def\build#1_#2^#3{\mathrel{
\mathop{\kern 0pt#1}\limits_{#2}^{#3}}}
\numberwithin{equation}{section}
\title{On the van Est analogy in Hopf cyclic cohomology}
\begin{document}
 
\author{Henri Moscovici}
 \address{Department of mathematics,  
The Ohio State University, 
Columbus, OH 43210, USA}
\email{moscovici.1@osu.edu}

\keywords{Hopf algebras; Hopf cyclic cohomology; van Est isomorphism; characteristic classes} 


 \maketitle


\section*{Introduction} \label{intro}

Hopf cyclic cohomology has emerged from the attempt  
to unravel the local formula~\cite{CM95} for the cyclic cohomological Chern character ~\cite{ncdg}
of the hypoelliptic spectral triple~\cite{CM98} modeling the space of leaves of a foliation. 
A close inspection of that formula brought to light a Hopf algebra of `moving frames'
$\Hc(n, \Rb)$ (abbreviated as $\Hc_n$), which appeared to assume
in the transverse geometry of foliations a role similar to that of
$\GL(n, \Rb)$ as structure group of the frame bundle of a manifold.
Further examination revealed that the the index formula in question 
was in fact the expression of a characteristic 
cocycle in the range of a certain cohomology specific to Hopf algebras. 
In the particular case of $\Hc_n$, this Hopf cyclic cohomology 
turned out to be isomorphic to the Gelfand-Fuchs cohomology of the
Lie algebra $\Fa_n$ of formal vector fields on $\Rb^n$ which, together with its
relative versions, is well-known to encode the geometric characteristic classes of foliations. 
The benefit of having the same classes recaptured
in Hopf cyclic cohomology is that the latter affords a direct characteristic map to the cyclic 
cohomology of \'etale foliations groupoids. 
Ultimately, the main outcome of the 
aforementioned investigation was the realization that the Hopf cyclic cohomology
$HP^\bullet(\Hc_n; \Cb_\d)$ of $\Hc(n, \Rb)$ together with its relative versions 
$HP^\bullet(\Hc_n, {\rm O}_n; \Cb_\d)$ and $HP^\bullet(\Hc_n, \GL(n, \Rb); \Cb_\d)$ provide 
the appropriate repository for all geometric
characteristic classes of foliations in the framework of noncommutative geometry. 

On the other hand in the classical theory it was known that the characteristic classes 
can be constructed not just in terms of the usual connection and curvature procedure, 
but also by simplicial cohomological methods involving $\GL(n, \Rb)$ as structure group
(\cf~\cite{Bott-ChWe,  BSS, KT, ShSt, Dupont}).
A. Gorokhovsky figured out how to convert that kind of representation
into the formalism of Hopf cyclic cohomology by extending it to differential graded 
(DG) Hopf algebras. He showed~\cite{Gor}  
that the suitably truncated Hopf cyclic cohomology $HP^\bullet((\Om^*\GL(n, \Rb))_m)$
of the DG Hopf algebra of differential forms on $\GL(n, \Rb)$ captures
the characteristic classes of flat $\G$-equivariant vector bundles of rank $n$, where $\G$ is a discrete group of diffeomorphisms of the base manifold of dimension $m$. 
In particular his results yield an isomorphism
\begin{align}   \label{G}
HP^\bullet(\Hc_n, {\rm O}_n; \Cb_\d) \xrightarrow{\cong} HP^\bullet((\Om^*\GL(n, \Rb))_n) ,
\end{align}
which bears a conspicuous resemblance with the van Est isomorphism for the continuous cohomology 
of $\GL(n, \Rb)$. 

The aim of this note is to show that this analogy is not coincidental, and it in fact extends to 
other avatars of the van Est isomorphism, thus providing the connection with the absolute cohomology 
$HP^\bullet(\Hc_n; \Cb_\d)$, which accounts for the characteristic classes of
framed foliations, and with the relative cohomology $HP^\bullet(\Hc_n, \GL(n, \Rb); \Cb_\d)$,
which only  stores the Chern classes. More specifically, we prove that in addition to
 \eqref{G} there are canonical isomorphisms
\begin{align} \label{Galg}
HP^\bullet(\Hc_n, \GL(n, \Rb); \Cb_\d) \xrightarrow{\cong} 
HP^\bullet((\Om^*\GL_{\rm alg}(n, \Rb))_n) ,
\end{align}
where $\Om^*\GL_{\rm alg}(n, \Rb)$ is the Hopf algebra of forms on $\GL(n, \Rb)$
viewed as an algebraic group, and 
\begin{align} \label{Gabs}
HP^\bullet(\Hc_n; \Cb_\d) \xrightarrow{\cong} 
HP^\bullet((\bar\Om^*\GL_{\rm germ}(n, \Rb))_n) ,
\end{align}
where $\bar\Om^*\GL_{\rm germ}(n, \Rb)$ is the Hopf algebra of germs of forms on the 
group germ $\GL_{\rm germ}(n, \Rb)$ defined by $\GL(n, \Rb)$.

The material is organized as follows. After recalling the basic definitions related to $\Hc_n$
and Hopf cyclic cohomology in \S \ref{Hn}, we outline 
Gorokhovsky's computation of the cyclic cohomology of $\Om^*\GL(n, \Rb)$ in \S \ref{DGn},
explain the `behind-the-scenes' role of the van Est isomorphism in \S \ref{diff}, and then
prove the above mentioned results in \S \ref{salg} (Theorem \ref{alg}) and 
 \S \ref{sgerm} (Theorem \ref{germ}). 
Finally, in \S \ref{fin} we illustrate the convenience of using $HP^\bullet((\Om^*\GL(n, \Rb))_n)$ as 
a repository of characteristic classes by giving examples of representative cocycles.

\section{Hopf algebra of moving frames and its cyclic cohomology} \label{Hn}

We recall in this section the definition of the Hopf algebra of moving frames in $\Rb^n$,
its Hopf cyclic cohomology, and how the relationship with the Gelfand-Fuchs
cohomology of the Lie algebra of formal vector fields on $\Rb^n$.
  
\subsection{Hopf algebra $\Hc_n$}
Consider the groupoid $\G_n= \Diff_n \ltimes \Rb^n$, where $ \Diff_n$ denotes the group
of diffeomorphisms of $\Rb^n$ equipped with the discrete topology. 
Let  $F\G_n= \Diff_n \ltimes F\Rb^n$, where $F\Rb^n= \Rb^n  \times \GL(n, \Rb)$
denote the `moving frames' groupoid, with $ \Diff_n$ acting by prolongation:
 \begin{equation*} 
{\vp} (x, {\bf y}) := \left( \vp (x), {\vp}^{\prime} (x)  
{\bf y} \right), \,  \text{where} \, \, \vp^{\prime}(x)
 = \left(\part_{j} \vp^{i}  (x)\right) \in  \GL(n, {\Rb}).
\end{equation*}
The convolution algebra $C_c^\ify (F\G_n)$ is spanned by monomials 
$f U_\vp$ with $f \in C_c^\ify (F\Rb^n)$ and $\vp \in \Diff_n$, with the product given by the rule
\begin{align*}
f_1 U_{\vp_1}\,\cdot  f_2 U_{\vp_2} = f_1 \left(f_2 \circ {\vp_1}^{-1}\right) \, U_{\vp_1 \vp_2} ,
\quad \text{where} \quad U_\vp(f)  =  f \circ \vp^{-1}.
\end{align*}
The vertical vector fields
  $Y_i^j = \sum_\mu y_i^{\mu} \frac{\p}{\p\, y_j^{\mu}}$ and the horizontal vector fields
$X_k = \sum_\mu y_k^{\mu} \frac{\p} {\p \, x^{\mu}}$, are made to act on 
$C_c^\infty(F\G_n)$ as follows: if $Z$ is one of these, then
  $Z (f U_{\vp})  :=  Z (f )U_{\vp}$. 
Since the right action of $\GL(n,\Rb)$ on  $F{\Rb}^n$ commutes with
the action of $\Diff_n$, the vertical operators so extended remain derivations:
\begin{equation*}  
Y_i^j (a \, b) \, = \, Y_i^j (a) \, b \, + \, a \,Y_i^j (b) \,   ,
\quad a, b \in C_c^\infty(F\G_n) \, .
\end{equation*}
By contrast, the horizontal vector fields are not $\Diff_n$-invariant, and satisfy instead
\begin{align*} 
\begin{split}
 \vp_*(X_k) \equiv U_\vp \, X_k \, U_{\vp}^{-1} &  =  \,  X_{k} \,  - \, \g_{jk}^i (\vp^{-1}) \, Y_i^j  \, , \qquad
\text{where} \\
\g_{jk}^i (\vp) (x, {\bf y}) & =\, \left( {\bf y}^{-1} \cdot
{\vp}^{\prime} (x)^{-1} \cdot \part_{\mu} {\vp}^{\prime} (x) \cdot
{\bf y}\right)^i_j \, {\bf y}^{\mu}_k \, .
\end{split}
\end{align*}
Consequently the extended operators $X_k$ are no longer derivations but satisfy
the generalized Leibniz rule
\begin{equation*}  
X_k(a \, b) \, = \, X_k (a) \, b \, + \, a \,X_k (b) \, + \,
\d_{jk}^i (a) \, Y_{i}^{j} (b)
 \,   , \quad
a, b \in C_c^\infty(F\G_n) .
\end{equation*}
Here $\, \d_{jk}^i $ are multiplication operators given by
\begin{equation*}   
\d_{jk}^i (f  U_{\vp})  =\, \g_{jk}^i  (\vp^{-1}) \, f  U_{\vp}  =\, 
- \g_{jk}^i (\vp) \circ \bar\vp^{-1} \, f  U_{\vp}  
\end{equation*}
and they satisfy the usual Leibniz rule
\begin{equation*} 
\d_{jk}^i (a \, b) \, = \, \d_{jk}^i  (a) \, b \, + \, a \,
\d_{jk}^i  (b) .
\end{equation*}
The operators  $\Id, \, \{X_k , \, Y^i_j  , \, \d_{jk}^i \} \,$ 
generate a unital subalgebra  of the linear operators on $C_c^\ify (F\G_n)$, 
which defines $\Hc_n$ as an algebra. Automatically,
$\Hc_n$ is also a Lie algebra with respect to the usual commutator, and
the commutator relations between its generators give more insight into its nature.
The vector fields satisfy the standard commutation
relations of the affine group, and the
iterated commutators of the operators $\, \d_{jk}^i $'s with
the $X_{\ell}$'s yield further operators
\begin{equation*} 
\d_{jk \,  \ell_1 \ldots \ell_r}^i \, := \, [X_{\ell_r} , \ldots
[X_{\ell_1} , \d_{jk}^i] \ldots] ,
\end{equation*}
which act by multiplication
\begin{eqnarray*}  
\d_{jk \,  \ell_1 \ldots \ell_r}^i \, ( f \, U_{\vp}) \, &:=& \,
 \g_{jk \,  \ell_1 \ldots \ell_r}^i (\vp^{-1})\, f \, U_{\vp} \, , \qquad \text{where}  \\
  \g_{jk \,  \ell_1 \ldots \ell_r}^i (\vp)\, &:=&
\, X_{\ell_r} \cdots X_{\ell_1} \big(\g_{jk}^i (\vp)\big) ,  \qquad \vp \in \bar\Gb_n ,
\end{eqnarray*}
and therefore commute with each other. They are no longer derivations but do satisfy
(generalized) Leibniz rules.

The Leibniz rules 
obeyed by the generators extend by multiplicativity to all $h \in \Hc_n$. In turn,
these rules uniquely determine a coproduct $\D:\Hc_n \to \Hc_n \ot \Hc_n$ 
by the stipulation expressed in the Sweedler notation as follows:
\begin{align} \label{cop}
 \D(h) = \sum h_{(1)} \ot h_{(2)}  \quad \text{iff} \quad h(a b)= \sum  h_{(1)} (a) h_{(2)} (b) .
\end{align}
The counit  $\e : \Hc_n \to \Cb$, $\e (h):= h(1)$ 
and the anti-automorphism $S: \Hc_n \to \Hc_n$ defined on generators by
\begin{align*}
S(Y_i^j) = - Y_i^j , \quad S(X_k) = -X_k + \d_{jk}^i Y_i^j , \quad S(\d_{jk}^i ) = -  \d_{jk}^i ,
\end{align*} 
complete the properties which make $\Hc_n$ a Hopf algebra. 

The antipode $S$ is not involutive but it can be twisted into an involution by means of 
the character $\d : \Hc_n \to \Cb$ which trivially extends the trace map on $\Fg \Fl_n (\Cb)$, 
\ie defined by
\begin{align*}
\d(Y_i^j) = \d_i^j , \quad \d(X_k) =0 , \quad \text{and} \quad \d(\d_{jk}^i ) =0 .
\end{align*} 
 Then $S_\d (h) := \sum \d(h_{(1)}) S (h_{(2)} )$, $h \in \Hc_n$ satisfies $S_\d^2 = \Id$.

\subsection{Hopf cyclic cohomology}  
By its very construction, the Hopf algebra $\Hc_n$ acts on the algebra $C_c^\infty(F\G_n)$,
and \eqref{cop} ensures that $C_c^\infty(F\G_n)$ is actually a left $\Hc_n$-module algebra. 
$C_c^\infty(F\G_n)$ has a canonical trace, namely
\begin{align}  \label{tau}
 \tau (f U_\vp) = \int_{F{\Rb}^n} f \varpi , \quad  \text{if} \,\, \vp=\Id , \qquad 
  \tau (f U_\vp) = 0 , \quad  \text{if} \,\, \vp \neq \Id , 
 \end{align} 
 where $\varpi$ is the volume form dual to  
 $X_1 \wedge \cdots \wedge X_n \wedge Y^1_1 \wedge \cdots \wedge 
Y^n_n $ (lexicographically ordered); $\tau$
is tracial since $\varpi$ is $\Diff_n$-invariant, and
in addition it is $\d$-invariant with respect to the action of $\Hc_n$, \ie satisfies
\begin{align*}  
\tau (h(a)) \, = \,  \d(h)\, \tau(a) , \qquad \fl \, h\in\Hc , \, \, a \in C_c^\infty(F\G_n) .
\end{align*} 

The cyclic cohomology of the Hopf algebra $\Hc_n$ was introduced in~\cite{CM98} 
by importing the standard cyclic structure of the algebra $C_c^\infty(F\G_n)$.
via the morphism $\chi_* = \{\chi_q: \Hc_n^{\ot \, q} \ra CC^q \left(C_c^\ify (F\G_n) \right)\}_{q \geq 0}$, where
\begin{align} \label{chi} 
 \chi_q (h^1 \ot \ldots \ot h^{q})(a_0, a_1, \ldots, a_q) := 
\tau \left(a_0 h^1(a_1) \cdots h^q(a_q)\right) . 
 \end{align}
The stipulation that $\chi_*$ is a morphism of $\D C$-objects, where $\D C$ is Connes' cyclic
category, endows
$\Hc_n^\natural := \sum_{q\geq 0}^\oplus \Hc_n^{\ot q}$ with the basic
$\D C$-morphisms
 \begin{align} \label{co}
 \begin{split}
\p_0 (h^1 \ot \ldots \ot h^{q-1}) &= 1 \ot h^1 
\ot \ldots \ot h^{q-1}, \\
\p_j (h^1 \ot \ldots \ot h^{q-1}) &= h^1 \ot \ldots \ot \D h^j \ot 
\ldots \ot h^{q-1}  \\
\p_q (h^1 \ot \ldots \ot h^{q-1}) &= h^1 \ot \ldots \ot h^{q-1}
\ot 1 \\
\s_i (h^1 \ot \ldots \ot h^{q+1})  &=  h^1 \ot \ldots \ot \ve (h^{i+1}) 
\ot \ldots \ot h^{q+1} \\
\tau_q (h^1 \ot \ldots \ot h^q)&= S_\d (h^1) \cdot (h^2 
\ot \ldots \ot h^q \ot 1).
\end{split}
\end{align}
The involutive property $ S^2_\d = \Id $ evidently implies $\tau_1^{2} = \Id$, and
in fact ensures that $\tau_q^{q+1} = \Id$ for any $q\in \Nb$ (\cf~\cite{CM99}).
 The corresponding cyclic cohomology groups $HC^*(\Hc;  \Cb_\d)$
are computed from the associated bicomplex $CC^{*,*} (\Hc_n;  \Cb_\d)$  with boundary operators
 \begin{align} \label{cc}
 b = \sum_{i=0}^{q+1} (-1)^i \p_i , \quad
 B = \bigl( \sum_{i=0}^q (-1)^{(q-1)i}\tau_q^i \bigr) \, \sigma_{q-1} \, (1- (-1)^q\tau_q) ,
\end{align}
and the periodic Hopf cyclic cohomology $HP^\bullet (\Hc_n; \Cb_\d)$ is 
 the $\Zb_2$-graded cohomology of the corresponding total complex.
 
More generally (\cf~\cite[Theorem 1]{CM99}), to any Hopf algebra $\Hc$ endowed with
a {\em modular pair in involution} $(\s, \d)$, \ie $\s$ is a group element, $\d$ a character
and $\d(\s)=1$, one associates a $(b, B)$ bicomplex in a similar way, 
after modifying the operators in \eqref{co} as follows: 
 \begin{align} \label{Hco}
 \begin{split}
\p_q (h^1 \ot \ldots \ot h^{q-1}) &= h^1 \ot \ldots \ot h^{q-1}
\ot \s \\
\tau_q (h^1 \ot \ldots \ot h^q)&= S_\d (h^1) \cdot (h^2 
\ot \ldots \ot h^q \ot \s).
\end{split}
\end{align}
The definition of Hopf cyclic cohomology was subsequently extended 
in~\cite{hkrs} to a large class of coefficients.

For applications to characteristic classes of foliations, the relative versions of the
Hopf cyclic cohomology of $\Hc_n$ of particular interest are that
relative to  ${\rm O}_n$, resp. to $\SO_n$ in the orientable case,
and to a lesser extent that relative to $\GL(n, \Rb)$. 

To simplify the notation,  in the remainder of this section we abbreviate $\GL(n, \Rb)$
by  $\GL_n$ and $\Fg \Fl_n (n, \Rb)$ by $\Fg_n$. 
Letting $H$ denote one of closed subgroups of $\GL_n$ enumerated above, we recall that
the cohomology of $\Hc_n$ relative to $H$ is defined as follows (\cf \cite{CM-big}). 

First note that the adjoint action 
of $\GL_n$ on the Lie algebra $\Fg\Fl_n \ltimes \Rb^n$ of the affine group
$\GL_n \ltimes \Rb^n$, with which we identify the frame bundle $F\Rb^n$, 
extends to a linear action of  $\GL_n$ on $\Hc_n$. 
With $\Fh$ denoting the Lie algebra of $H$,
its universal enveloping algebra $\Uc (\Fh)$ is a sub-Hopf algebra of $\Hc_n$.
The quotient $\Hc_n \ot_{\Uc (\Fh)} \Cb \equiv \Hc_n /\Hc_n \Uc^+ (\Fh)$ (where
$\Uc^+ (\Fh)$ denotes the ideal of $\Uc(\Fh)$ generated by $\Fh$)
  is an $\Hc_n$-module coalgebra with respect to the coproduct and counit
 induced from $\Hc_n$. We denote by $\Qc_H$ its subspace of invariants under the  
 action of $H$ on $\Hc_n$.
The cyclic object defining the relative cohomology consists of 
$ \sum_{q\geq 0}^\oplus \Qc_H^{\ot q}$ 
endowed with the basic $\D C$-morphisms defined by 
 \begin{eqnarray} \label{relco}
 \begin{split} 
 \p_0 (c^1 \ot \ldots \ot c^{q-1}) &=& 
  \dot{1} \ot c^1 \ot \ldots \ot
  \ldots \ot c^{q-1} ,   \\  
\p_i  (c^1 \ot \ldots \ot c^{q-1})  &=& 
c^1 \ot \ldots \ot \D c^i \ot \ldots \ot c^{q-1} ,  \\ 
\p_n  (c^1 \ot \ldots \ot c^{q-1})  &=& c^1 \ot \ldots \ot c^{q-1}
\ot \dot{1}\, ; \\  
 \s_i  (c^1 \ot \ldots \ot c^{q+1})  &=& c^1 \ot \ldots \ot \ve
(c^{i+1}) \ot \ldots \ot c^{q+1} , \\   
   \tau_q ( \dot{h}^1 \ot c^2 \ot \ldots \ot c^q) &=& 
  {S_\d}(h^1) \cdot (c^2
   \ot \ldots \ot c^q \ot \dot{1}) .
 \end{split}
\end{eqnarray}
The corresponding bicomplex is denoted $CC^* (\Hc_n, H;  \Cb_\d)$ 
and the $\Zb_2$-graded 
cohomology group of the total complex is $HP^\bullet (\Hc_n, H ; \Cb_\d)$.

The above cohomology groups have been computed in \cite{CM95} (and by alternative means  
in \cite{MR09, MR11}). They are isomorphic to the Gelfand-Fuchs 
cohomology of the Lie algebra $ \Fa_n$ of formal vector fields
on $\Rb^n$, resp. to its relative versions, which in turn are isomorphic to
the cohomology of truncated Weil algebras (\cf \cite{GF, Godbillon}), as follows: 
 \begin{align} \label{Habs}
 HP^\bullet(\Hc_n; \Cb_\d) \cong H_{\rm cont}^{\bullet}(\Fa_n; \Cb) 
\cong  H^\bullet({W}(\Fg \Fl_n)_n)  , 
\end{align}
and relative to $H={\rm O_n}, \SO_n$ or  $\GL(n, \Rb)$ 
  \begin{align} \label{Hrel}
 HP^\bullet(\Hc_n, H; \Cb_\d) \cong 
H_{\rm cont}^{\bullet}\bigl(\Fa_n,H; \Cb\bigr) \cong  H^\bullet\bigl(W(\Fg \Fl_n, H)_n\bigr) .
\end{align}
Here  $W(\Fg \Fl_n)= \wedge \Fg \Fl_n^* \otimes S(\Fg \Fl_n^*)$ is the Weil algebra,   
${W}(\Fg \Fl_n)_m$ denotes the
 quotient by the ideal generated by $\sum_{k >m}S^k(\Fg \Fl_n^*)$, and
 ${W}(\Fg \Fl_n, H)_m$ stands for the DG subalgebra of $H$-basic elements in ${W}(\Fg \Fl_n)_m$. Explicit bases for the cohomology groups in the right hand side of
 \eqref{Habs} and \eqref{Hrel} are given in \cite{Godbillon}, and their geometric realizations
via the above isomorphisms can be found in \cite{Mos2015, Mos2015b}.

  \section{Hopf algebras of forms and their cyclic cohomology} \label{DG}
  
 \subsection{DG Hopf algebra of a Lie group} \label{DGn}
 Let $G$ be an almost connected Lie group. Its algebra of differential forms $\Om^*(G)$ 
has an natural structure of a topological (Fr\'echet) DG Hopf algebra, extending that of the function
algebra $C^\ify(G)=\Om^0(G)$. Indeed $ C^\ify(G) $,
equipped with its standard topological vector space structure 
of uniform convergence on compacta of functions and their 
derivatives, is a Hopf algebra with
the coproduct $\D: C^\ify(G) \to C^\infty(G) \hat{\otimes}C^\infty(G)=C^\infty(G \times G)$ 
defined by $\D(f)(x,y)=f(xy)$, the counit given by evaluation at 
the unit $\one \in G$ and the antipode $S$ induced by inversion. As topological vector spaces
$\Om^*(G) \cong C^\ify(G) \ot \bigwedge \Fg^*$, where $\Fg$ is the Lie algebra of $G$, and the  Hopf structure of $C^\ify(G) = \Om^0(G)$ extends naturally to $\Om^*(G)$. 
Together with the de Rham differential, 
the couple  $\left(\Om^*(G), d \right)$ is a DG (Fr\'echet) Hopf algebra. 

In~\cite[\S 3]{Gor} Gorokhovsky defines a cyclic object associated to an arbitrary  DG Hopf algebra
$(\Hc^*, d)$ endowed with a modular pair, by a natural graded extension of the cyclic object 
associated to $\Hc^0$. In particular this applies to $\left(\Om^*(G), d \right)$ which, 
having involutive antipode, carries a trivial modular pair. The cyclic object
consists of 
$$(\Om^*G)^\natural := \sum_{q\geq 0} \Om^*(G)^{\hat {\otimes}q} \equiv 
\sum_{q\geq 0} \Om^*(G^{\times q}) 
$$
equipped
with the $\D C$-morphisms similar to those in \eqref{co}, with the difference that the 
summands (implicitly present) in the expression 
of the cyclic operator $\tau_q$ acquire appropriate signs (\cf \cite[\S 3,  (3.3)]{Gor}).  
The corresponding $(b, B)$-bicomplex $C^{*,*}(\Om^*(\GL(n, \Rb)))$ 
is then upgraded to a tricomplex $C^{*,*,*}(\Om^*(G),d)$ by the addition of the differential 
\begin{align*}   
{d}(\a_1 \otimes \cdots \otimes \a_q)= \sum_{i=1}^q (-1)^{\deg \a_1 + \cdots +\deg\a_q}
\a_1 \otimes \cdots d\a_i \cdots \otimes \a_q) ,
\end{align*}
The latter is filtered by the subcomplexes
\begin{align*}  
  F^mC^\bullet(\Om^*(G), d)=
 \{\sum \a_1 \otimes \cdots \otimes \a_q \mid \, \deg \a_1 + \cdots +\deg\a_q > m \} ,
\end{align*}
and gives rise, for each $m \in \Zb^+$, to a truncated complex
\begin{align*}  
  C^{*,*,*}(\Om^*(G)_m,d) =
 C^{*,*,*}(\Om^*(G), d)/F^mC^\bullet(\Om^*(G), d) ;
\end{align*}
Its Hochschild, cyclic and periodic cyclic cohomology, formed using only finite cochains,
 will be respectively denoted by
 $HH^*\left((\Om^*G)_m\right)$, $HC^*\left((\Om^*G)_m\right)$ and 
 $HP^\bullet\left((\Om^*G)_m\right)$. 
Gorokhovsky computed them in terms of the cohomology of truncated Weil algebras 
as follows.

\begin{theorem}[\cite{Gor}, \S 6] \label{Gor-Iso}
With $K$ denoting the maximal subgroup of $G$, there are canonical isomorphisms 
\begin{align} \label{muHH} 
HH^*\left((\Om^*G)_m\right) & \cong  H^*\bigl(W(\Fg, K)_m\bigr), 
 \quad \fl \, m \geq 0,\\  \label{muHC}
 HC^q\left((\Om^*G)_m\right) &\cong \bigoplus_{i \geq 0}H^{q-2i}\bigl(W(\Fg, K)_m\bigr).
\end{align}
\end{theorem}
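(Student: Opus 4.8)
The plan is to establish \eqref{muHH} first and then deduce \eqref{muHC} from it by a Connes-type argument. By definition $HH^*\bigl((\Om^*G)_m\bigr)$ is the cohomology of the truncated total complex of the double complex $\bigl(\bigoplus_q \Om^*(G^{\times q}),\, b,\, d\bigr)$, where $b$ is assembled from the coproduct faces in \eqref{co} and $d$ is the de Rham differential. Since the coproduct of $\Om^0(G)=C^\ify(G)$ is dual to the group multiplication, this is exactly the simplicial--de Rham complex of the nerve of $G$, whose untruncated total cohomology is the real cohomology $H^*(BG)$ of the classifying space. The first task is therefore to identify the effect of the degree truncation by $F^m$ and to match the outcome with the truncated Weil algebra.

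First I would filter by the simplicial degree and run the associated spectral sequence, taking de Rham cohomology along the rows. As $H^*(G^{\times q})\cong H^*(G)^{\ot q}$ by K\"unneth, the $E_1$-page is the cobar complex of the graded Hopf algebra $H^*(G)$, with cohomology $\mathrm{Cotor}_{H^*(G)}(\Cb,\Cb)$. For an almost connected $G$ with maximal compact $K$ one has $H^*(G)\cong H^*(K)$, an exterior algebra, and this Cotor is the polynomial algebra computing $H^*(BG)\cong H^*(BK)$; Chern--Weil theory identifies the latter with $H^*\bigl(W(\Fg,K)\bigr)$. The van Est isomorphism, whose role is explained in \S\ref{diff}, provides the cochain-level realization of this identification: the van Est map sends a form on $G^{\times q}$ to the element of $\wedge\Fg^*\ot S\Fg^*$ produced by antisymmetrized differentiation at the identity, it intertwines $b+d$ with the Weil differential, and it is compatible with the two truncations, carrying $F^m$ to the ideal of $W(\Fg)$ generated by the high symmetric powers $\sum_{k>m}S^k\Fg^*$. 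Verifying that this map is a morphism of filtered complexes inducing an isomorphism on each truncated quotient yields \eqref{muHH}.

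The passage to \eqref{muHC} is then formal. I would feed \eqref{muHH} into Connes' $SBI$ exact sequence relating the Hochschild and cyclic cohomologies of the truncated cyclic object $(\Om^*G)_m$, and check that the connecting map $B$ vanishes on Hochschild cohomology; this is transparent on the Weil-algebra representatives furnished above. The long exact sequence then breaks into short exact sequences $0\ra HH^q\ra HC^q\ra HC^{q-2}\ra 0$, and induction on $q$ gives the splitting $HC^q\bigl((\Om^*G)_m\bigr)\cong\bigoplus_{i\geq 0}H^{q-2i}\bigl(W(\Fg,K)_m\bigr)$, the sum being finite since the truncated Weil algebra vanishes in negative and sufficiently high degrees.

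The main obstacle I expect lies not in the homological bookkeeping but in making the van Est comparison compatible with the truncation. The classical van Est isomorphism is a statement about full cohomology, proved by a contracting homotopy built from the contractibility of $G/K$, and such a homotopy need not preserve the filtration by total form degree. The heart of the argument is therefore to arrange the homotopy, or equivalently the spectral-sequence comparison, so that it is filtered and descends to every truncation level $m$ at once, thereby upgrading the single untruncated van Est statement to the whole family of isomorphisms \eqref{muHH}.
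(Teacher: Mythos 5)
Your starting point --- identifying the Hochschild complex of $(\Om^*G,d)$ with Bott's simplicial de Rham bicomplex of the nerve $NG$ --- and your endgame --- deducing \eqref{muHC} from \eqref{muHH} via the $SBI$ sequence once the Hochschild classes have cyclic representatives --- both agree with the paper. But the core of your Hochschild argument has a genuine gap, located exactly where you yourself flag discomfort. You filter by simplicial degree and take de Rham cohomology first, obtaining the cobar complex of $H^*(G)$ and ${\rm Cotor}$, i.e.\ $H^*(BG)$. That computation is valid only for the \emph{untruncated} complex: the truncation $F^m$ is by total form degree, so in the truncated complex the rows are degree-truncated de Rham complexes, whose cohomology is not the truncation of $H^*(G^{\times q})$; both the K\"unneth decomposition and the cobar identification fail at level $m$. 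Worse, the strategy of computing the untruncated answer first and then ``identifying the effect of the truncation'' cannot succeed even in principle: $H^*\bigl(W(\Fg,K)_m\bigr)$ is not any truncation of $H^*\bigl(W(\Fg,K)\bigr)\cong H^*(BG)$, since it contains secondary classes (for instance the Godbillon--Vey class $u_1c_1^n$ for $G=\GL(n,\Rb)$, $m=n$) which simply do not exist before truncating. This is why the paper, following Bott, filters the \emph{other} way, by form degree: that filtration is compatible with $F^m$, the Decomposition Lemma gives $E_1^{p,q}\cong H^{p-q}_{\rm cont}\bigl(G;S^q(\Fg^*)\bigr)$, and van Est enters columnwise, surviving truncation because it is applied coefficient by coefficient with $S^q(\Fg^*)$ replaced by $S^q(\Fg^*)_m$.

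Second, the map you invoke to conclude --- a ``van Est map'' sending a form on $G^{\times q}$ to an element of $\wedge\Fg^*\ot S(\Fg^*)$ by differentiation at the identity, intertwining $b\pm d$ with the Weil differential and carrying $F^m$ to the ideal $\sum_{k>m}S^k(\Fg^*)$ --- is precisely what does \emph{not} exist as a direct filtered quasi-isomorphism; the paper states that the spectral sequences of $W(\Fg,K)_n$ and $\Om^*(NG)_n$ cannot be directly compared. Gorokhovsky's proof goes in the opposite direction: the universal connection \eqref{conn} on $EG\to BG$ and the universal property of the Weil algebra give $w_K\colon W(\Fg,K)\to\Om^*(EG/K)$ inside Dupont's complex; a geodesic-simplex cross-section $s\colon BG\to EG/K$ (this is where contractibility of $G/K$ enters, not as a contracting homotopy on cochains) and integration over simplices then produce $\mu=\Ic_\D\circ s^*\circ w_K$ as in \eqref{mu}, which preserves the filtrations by construction; that it is a quasi-isomorphism on every truncation rests on Kamber--Tondeur and Shulman--Stasheff, whose own argument is a zigzag through the semi-simplicial Weil algebra $W_1(\Fg)$ and $\Om^*(\bar{N}G)_K$ rather than a single direct map (see \S\ref{diff}). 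This Chern--Weil/cross-section construction (or the Shulman--Stasheff zigzag) is the missing idea; without it neither arrow of \eqref{muHH} is produced. Your reduction of \eqref{muHC} to \eqref{muHH} is otherwise the paper's, with two corrections: the split short exact sequences run $0\to HC^{q-2}\xrightarrow{S} HC^q\xrightarrow{I} HH^q\to 0$, not in the order you wrote, and the vanishing of $B$ is not ``transparent'' but is secured by the equivariance of the connection and of $s$ under the cyclic group action, which makes the cocycles in the image of $\mu$ genuinely cyclic.
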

 
For the clarity of the ensuing discussion we outline Gorokhovsky's proof.  
The crucial observation is that the Hochschild 
bicomplex $C^{*,*}(\Om^*(G), b, d)$ coincides with the Bott~\cite{Bott-ChWe}
simplicial de Rham bicomplex $ \{\Om^{*,*}(NG), \d, d\}$ of the nerve $NG$ of
$G$. Recall that $NG:=\bigsqcup N_pG$, where $N_pG=G^p$, and $\Om^{p,q}(NG) =
 \Om^q(G^p)$.
 
As was shown in \cite{BSS} for a general simplicial manifold $X=\{X_p\}$, the cohomology
of $ \{\Om^{*,*}(X), \d, d\}$ is isomorphic to the singular cohomology of the 
geometric realization of $X$. On the other hand,
Dupont \cite{Dupont}) introduced a related de Rham complex, $ \{\Om^*(||X||), d\}$,
where $||X||$  is the ``thick'' geometric realization of $X$,
which has the advantage of being graded commutative. A $q$-form $\om\in \Om^q(||X||)$ is a 
collection $\{\om_p\}$ of de Rham $q$-forms on $\bigsqcup_p X_p$,
satisfying the compatibility condition
$$
(\varepsilon^i \times \Id)^*\om_p = (\Id \times \varepsilon_i)^*\om_{p-1} ,
$$
where $\varepsilon_i:X_p \to X_{p-1}$ are the face operators corresponding to the
inclusions of the faces $\varepsilon^i:\D^{p-1}\to \D^p$, $i=0,1, \ldots , p$. 
Dupont has shown that integration over simplices $\Ic_\D: \Om^*(||X||) \to \Om^*(X)$,
$$ \Ic_\D(\a) = \int_{\D^p} \a \mid \D^p \times X_p 
$$
 defines a quasi-isomorphism of $ \{\Om^*(||X||), d\}$ with 
with the total complex $ \{\Om^{*}(X), \d \pm d\}$.

We next recall that
 the geometric realization $EG$ of the simplicial manifold $\bar{N}G$, with 
$\bar{N}_p G = G^{p+1}$ on which $G$ acts diagonally, gives 
the total space of the universal (right) $G$-bundle $\pi: EG \to BG$.
This bundle has a canonical connection $\theta$ induced by the
Maurer-Cartan form on $G$; for a linear group $G$ the expression of
$\theta \mid \D^p \times \bar{N}_p G$ is
\begin{align} \label{conn}
\theta (t_0, \ldots, t_p; g_o, \ldots, g_p)= t_0 g_o^{-1}dg_o + \cdots +t_p g_p^{-1}dg_p ,
\end{align}
and the curvature form is $\Om= d\theta + \theta\wedge\theta$.

The Dupont complex being a $\Fg$-DG algebra, by the universal property of the Weil algebra 
(see \cite{Cartan}) this connection determines a morphism  $w: W(\Fg) \to \Om^*(EG)$.
In turn, the latter descends to $K$-invariants yielding a morphism 
 $w_K: W(\Fg, K) \to \Om^*(EG/K)$. Using the contractibility of $G/K$ and the
 canonical construction of geodesic simplices (\cf \cite[\S 5]{Dupont} for
 metrics of nonpositive curvature, one defines a simplicial
 cross-section $s: BG \to EG/K$ (\cf \cite[Eqs. (6.10),(6.11)]{Gor}), which moreover
 is equivariant with respect
 to the cyclic group action (\cf \cite[ (6.8)]{Gor}). The composition
 \begin{align} \label{mu}
\mu = \Ic_\D\circ \, s^*\circ w_K: W(\Fg, K) \to \Om^*(NG)  
\end{align}   
preserves the canonical filtrations and descends to the truncated complexes of 
any level $m \geq 0$, 
inducing in cohomology a morphism
\begin{align} \label{WH}
\mu^*_{HH} : H^*\left(W(\Fg, K)_m\right) \to H^q\left((\Om^*(NG))_m\right) =
HH^q\left((\Om^*G)_m\right) .
\end{align}
In view of results proved in~\cite{KT} and \cite{ShSt}, it follows that $\mu^*_{HH}$ is actually 
an isomorphism. 
Finally, since the connection \eqref{conn} and the cross-section $s$ are invariant 
under the natural cyclic action, the Hochschild cocycles in the image of $\mu$ are actually
cyclic. This implies that
Connes' periodicity exact sequence splits into short exact sequences
 \begin{align*} 
0 \to HC^{q-2}\left((\Om^*G)_m\right) \xrightarrow{S} HC^q\left((\Om^*G)_m\right)
\xrightarrow{I} HH^q\left((\Om^*G)_m\right) \to 0 ,
\end{align*}   
which in turn implies the isomorphism \eqref{muHC}
for cyclic cohomology.

\subsection{Role of van Est isomorphism} \label{diff}
 We now revisit the proof of the isomorphism \eqref{WH} and give an alternative 
argument emphasizing the role of the  van Est isomorphism, which will serve
as a template for dealing with the counterparts of the isomorphism \eqref{muHC} 
corresponding to $H=\{\1b\}$ in \eqref{Habs} and and $H=\GL(n, \Rb)$ in  \eqref{Hrel}.

A key element of the approach we are about to describe is the linkage
made by Bott~\cite{Bott-ChWe} with the continuous group cohomology. 
Relying on the Dold-Puppe homology theory for nonadditive functors,
Bott proved a basic Decomposition Lemma~\cite[\S 2]{Bott-ChWe} for the 
simplicial de Rham complex $\Om^*(NG)$ and use it show that
for any Lie group $G$ the spectral sequence 
of the bicomplex $\{\Om^*(NG), d, \d\}$ filtered by the degree of the forms
converges to the cohomology of $BG$. In particular (\cf~\cite[Theorem 1]{Bott-ChWe}) 
he expressed the $E_1$-term as continuous group cohomology with coefficients:
\begin{align} \label{BvE}
E_1^{pq}= H_\d^p \bigl(\Om^q(NG)\bigr) \cong H_{\rm cont}^{p-q}\bigl(G; S^q(\Fg^*)\bigr) .
\end{align}   
By the van Est isomorphism~\cite{vEst},  
\begin{align} \label{vE} 
 H_{\rm cont}^*\bigl(G; S^*(\Fg^*)\bigr) \cong H^*(\Fg, K;\Rb) \ot  S^*(\Fg^*)^G ,
\end{align}
where $K$ is the maximal compact subgroup. 
In the case when $G$ itself is compact one has
\begin{align*}
 H_{\rm cont}^0\bigl(G; S^*(\Fg^*)\bigr) \cong S^*(\Fg^*)^G \quad \text{and} \quad
 H_{\rm cont}^k\bigl(G; S^*(\Fg^*)\bigr), \quad \fl k >0 ,
\end{align*}
and so \eqref{BvE}and \eqref{vE} imply that
there is no cohomology above the diagonal. This allows Bott to conclude (\cf~\cite[Remark (a)]{Bott-ChWe}) that 
the edge homomorphism to the cohomology of the total complex
\begin{align*}
S^*(\Fg^*) \rightarrow H^*(\Om^*(NG)) ,
\end{align*} 
which under the identification 
$H^*(\Om^*(NG)) \cong H^*(BG)$ coincides with the Chern-Weil homomorphism,
is an isomorphism.

\

For the remainder of this section, we specialize to the group $\GL(n, \Rb)$ 
in order to remain consistent with the context of \S \ref{Hn}. 
To keep the notation convenient though
we will simply denote it by $G$ (except for the rare occasion when $G$ is allowed to 
be an arbitrary Lie group, in which case this will be explicitly stated). 

Returning to the Bott spectral sequence, where now for $G=\GL(n, \Rb)$ and 
$K={\rm O_n}$, it is well known that
\begin{align}  \label{E}
\begin{split}
H^*(\Fg, K; \Rb)  &\cong E (h_1, h_3, \ldots , h_{2 [\frac{n}{2}] +1}) , \\
 \text{and} \quad S^*(\Fg^*)^G & \equiv P[c_1, c_2, \ldots, c_n] ;
\end{split}
\end{align}
here $E$ stands for the exterior algebra in generators $h_i$ of degree $\deg h_i =2i-1$,
and $ P[c_1, c_2, \ldots, c_n]$ denotes
the polynomial algebra in generators $c_i$ of degree $\deg c_i =2i$.
Since the sequence converges to $H^*(BG) =
P[c_2, c_4..., c_{2 [\frac{n}{2}]}]$, it can be seen, successively, that $d_{2i-1}$ sends $h_{2i-1}$ to 
a non-zero multiple of $c_{2i-1}$, and that $c_{2i}$'s survive; also, $d_r =0$ for 
$r > 2 [\frac{n +1}{2}]$.

A similar pattern occurs for the spectral sequence of the truncated complex 
$(\Om^*(NG))_n, d, \d)$, the main
difference being that the polynomial algebra $P[c_1, c_2, \ldots, c_n]$ 
is replaced by its truncation modulo the ideal of elements of degree $> 2n$,
denoted $S^*(\Fg*)_n^{\GL_n} \equiv P_n[c_1, c_2, \ldots, c_n] $. 
In particular
\eqref{BvE} and \eqref{vE} become
\begin{align} \label{BvEn}
 H_\d^p \bigl(\Om^q(NG)_n\bigr) \cong H_{\rm cont}^{p-q}\bigl(G; S^q(\Fg^*)_n\bigr),
\end{align} 
respectively
\begin{align} \label{vEstn} 
 H_{\rm cont}^*\bigl(G; S^*(\Fg^*)_n\bigr) \cong H^*(\Fg, K;\Rb) \ot  S^*(\Fg^*)_n^G .
\end{align}
Thus, in view of \eqref{E}, the corresponding $E_1$-term is 
\begin{align} \label{wo} 
 E_1 \cong WO_n :=
 E (u_1, u_3, \ldots , u_{2 [\frac{n}{2}] +1}) \ot P_n[c_1, c_2, \ldots, c_n] .
\end{align}
There is a natural 
inclusion of the DG algebra $WO_n$ into the quotient  $W(\Fg, K)_n$ of  $W(\Fg, K)$ by 
 the ideal of elements of degree $> 2n$, and the map  $WO_n \hookrightarrow W(\Fg, K)_n$ 
is a quasi-isomorphism. Indeed, this follows from the comparison theorem for
the spectral sequences associated to the quotient of the standard
filtration (by the ideal generated by the elements of degree $>2n$), as the
$E_1$ term of both spectral sequences is the same as in \eqref{wo}.
 Although the spectral sequences of $W(\Fg, K)_n$ and $\Om^*(NG)_n$ cannot be directly
 compared, by chasing their differentials one can still infer that
 $H^*(W(\Fg, K)_n)$ and $H^*(\Om^*(NG)_n)$ are formally isomorphic. 
 
 The elegant argument though, due to Shulman and Stasheff~\cite{ShSt}, involves the 
semi-simplicial Weil algebra of Kamber and Tondeur~\cite{KT}. Relying on a semi-simplicial
 generalization of the van Est isomorphism, Shulman and Stasheff arrive
 to the isomorphism $H^*(W(\Fg, K)_n) \cong H^*(\Om^*(NG)_n)$ not by ad hoc
calculations but by exploiting the standard comparison theorem for 
spectral sequences. 
To explain their line of argument, we recall that the semi-simplicial Weil
algebra $W_1(\Fg)$ (see~\cite{KT, KT2}) is a $\Fg$-DG algebra with
underlying vector space $\bigoplus_{p \geq 0}W(\Fg^{p+1})$  
and faces induced by the projections ${\rm pr}_{k}: \Fg^{p+1} \to \Fg^p$, $0\leq k \leq p$,
that omit the $(k+1)$th factor.
The projection $\pi:W_1(\Fg) \to W(\Fg)$ on the first summand ($p=0$) is a map of $\Fg$-DG algebras, and is shown to induce a chain equivalence. This is but a special case of a core
result of Kamber and Tondeur (\cite[Theorem 8.12]{KT}, also \cite[\S 6]{KT2}), which for 
$W_1(\Fg)$ can be stated as follows. 

\begin{theorem}[\cite{KT}, Ch.8] Let $G$ be an almost connected Lie group and $H$ a
closed subgroup. There is a suitable filtration $\Fc_1$ of $W_1(\Fg)$
such that  the canonical projection at the level of truncated algebras
$$
\pi_m: W_1(\Fg, H)_m:=W_1(\Fg, H)/\Fc_1^{m+1}W_1(\Fg, H) \to W(\Fg, H)_m
$$ 
induces an isomorphism of the associated spectral sequence  
and hence on cohomology:
\begin{align} \label{KT}
\pi_m^\natural :  H^*(W_1(\Fg, H)_m) \xrightarrow{\cong} H^*(W(\Fg, H)_m) .
\end{align}
\end{theorem}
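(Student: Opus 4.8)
The plan is to treat $\pi_m$ as a morphism of the spectral sequences attached to the Weil filtrations and to reduce the claim to an isomorphism on $E_1$, after which the standard comparison theorem for spectral sequences---already invoked for the quasi-isomorphism $WO_n \hookrightarrow W(\Fg,K)_n$ below \eqref{wo}---forces the isomorphism on $E_\infty$ and hence on cohomology. First I would record that the projection $\pi:W_1(\Fg)\to W(\Fg)$ onto the $p=0$ summand carries the semi-simplicial Weil filtration $\Fc_1$ into the ordinary Weil filtration $\Fc$ (by curvature, i.e. symmetric, degree) and is equivariant for the $H$-action; consequently it restricts to the $H$-basic subalgebras and descends to the truncated quotients, producing the map $\pi_m$ of the statement. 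Because on each truncated complex the filtration is bounded (nothing survives in symmetric degree $>m$), the two spectral sequences converge, so it suffices to prove that $\pi_m$ is an isomorphism on $E_1$.

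The $E_1$-page on the target is the familiar output of the ordinary relative Weil construction: passing to the associated graded of $\Fc$ splits off the truncated invariant polynomials $S(\Fg^*)^H_m$ and leaves a complementary complex computing relative Lie algebra cohomology, so that $E_1\bigl(W(\Fg,H)_m\bigr)\cong H^*(\Fg,H;\Cb)\ot S(\Fg^*)^H_m$, the $WO_n$-type pattern exhibited in \eqref{wo} for $H=K$. I would then compute the $E_1$-page on the source. Taking the associated graded of $\Fc_1$ leaves, in each fixed curvature weight, a semi-simplicial cochain complex $p\mapsto \wedge(\Fg^{p+1})^*\ot S^k(\Fg^*)$ whose faces are those induced by the projections ${\rm pr}_k$; the $E_1$-term of $W_1(\Fg,H)_m$ is the cohomology of this complex in the simplicial direction, taken on $H$-basic elements.

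The crux is to show that $\pi$ identifies this simplicial-direction cohomology with the target $E_1$. The simplicial vector space $p\mapsto \Fg^{p+1}$ with faces omitting a factor is the bar object modelling the contractible total space $E\Fg$; it carries an extra degeneracy and is therefore simplicially contractible. Since a functor preserves simplicial homotopies, the difficulty is only that the Weil functor $V\mapsto \wedge V^*\ot S(V^*)$ is not additive, so the induced contraction of $p\mapsto W(\Fg^{p+1})$ must be analysed through the Dold--Puppe calculus for non-additive functors---precisely the apparatus underlying Bott's Decomposition Lemma behind \eqref{BvE}. That analysis concentrates the simplicial cohomology in degree $p=0$, where it returns the single copy $W(\Fg)$; this is the semi-simplicial incarnation of the van Est isomorphism \eqref{vE} exploited by Shulman and Stasheff, and it shows that $\pi$ induces an isomorphism on $E_1$. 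Restricting to $H$-basic cochains is compatible with the construction because $H$ acts by filtered DG-algebra automorphisms commuting with the simplicial faces, so taking invariants is exact on the relevant graded pieces.

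The main obstacle is exactly this semi-simplicial collapse at $E_1$: one must simultaneously control the non-additivity of the Weil functor (which rules out a naive degreewise contraction and forces the Dold--Puppe argument), verify that the truncation by $\Fc_1^{m+1}$ cuts the semi-simplicial complex into finite, $\pi$-compatible pieces, and check that passage to $H$-basic elements commutes with all of this. Once the $E_1$-isomorphism is established, the comparison theorem completes the proof: a filtration-preserving morphism of bounded convergent spectral sequences that is an isomorphism on $E_1$ is an isomorphism on every subsequent page and on the abutment, so $\pi_m$ induces the asserted isomorphism $\pi_m^\natural$ of \eqref{KT}.
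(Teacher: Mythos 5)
Your global strategy---realize $\pi_m$ as a filtration-preserving morphism, prove an isomorphism at $E_1$, and let the comparison theorem for bounded convergent spectral sequences finish the job---is indeed the strategy of Kamber--Tondeur and Shulman--Stasheff; note that the paper itself offers no proof of this statement (it is imported from \cite[Theorem 8.12]{KT} and \cite[\S 6]{KT2}), so the measure is against those sources and the discussion in \S\ref{diff}. The genuine gap sits exactly where all the content of the theorem lives: the filtration $\Fc_1$. You describe $\Fc_1$ as the filtration by curvature (symmetric) degree, and simultaneously assert that its associated graded in weight $k$ is $p\mapsto \wedge(\Fg^{p+1})^*\ot S^k(\Fg^*)$. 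These two assertions are incompatible, and the first is fatal. The weight-$k$ graded piece of the symmetric-degree filtration is $\wedge(\Fg^{p+1})^*\ot S^k\bigl((\Fg^{p+1})^*\bigr)$: the symmetric coefficients vary with $p$. This exhibits the graded piece as one contravariant functor $F_k(V)=\wedge V^*\ot S^k(V^*)$ evaluated on the simplicially contractible object $p\mapsto \Fg^{p+1}$, so the extra-degeneracy contraction you invoke (which commutes with the Chevalley--Eilenberg part of the differential, $v\mapsto (0,v)$ being a Lie algebra map) contracts the entire graded piece onto $F_k(0)$; since $S^k(0)=0$ for $k\geq 1$, every positive-weight graded piece is \emph{acyclic}. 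Consequently, for the closed subgroup $H=\{\1b\}$ (a case the paper genuinely needs, in the proof of Theorem \ref{germ}), your spectral sequence would collapse to $E_1=\Rb$ and give $H^*(W_1(\Fg)_m)\cong \Rb$, whereas $H^*(W(\Fg)_m)$ is the cohomology of the truncated Weil algebra and is far from trivial. In short, the contraction you appeal to proves the wrong statement; your claim that it ``returns the single copy $W(\Fg)$'' at $p=0$ is inconsistent with contracting onto a point, which returns $F_k(0)$, not $F_k(\Fg)$. The word ``suitable'' in the statement is doing real work: Kamber--Tondeur's filtration is engineered precisely so that the graded pieces become the fixed-coefficient van Est complexes $p\mapsto \wedge(\Fg^{p+1})^*\ot S^k(\Fg^*)$ that you wrote down, and constructing a multiplicative, differential-compatible filtration of $W_1(\Fg)$ with that associated graded is the actual content of \cite[Ch.\ 8]{KT}; it cannot be assumed away.

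A second, related defect concerns $H$-basic elements. Your compatibility remark addresses only invariance (``taking invariants is exact''), but basic means invariant \emph{and} horizontal, i.e.\ also annihilated by the contractions $\iota_X$, $X\in \Fh$. The extra-degeneracy homotopy is $H$-equivariant, but it does not preserve horizontality (pulling back along $v\mapsto(0,v)$ converts the diagonal contraction into one involving $(X,0,\ldots,0)$), so even once the graded pieces have the correct fixed-coefficient form, the cohomology of their basic subcomplexes cannot be obtained by a naive degreewise contraction---on the basic subcomplex the simplicial direction computes something genuinely nontrivial. What is required there is the semi-simplicial van Est mechanism: playing the simplicial differential against the Lie-algebra differential in the bicomplex of $H$-basic cochains, exactly as in van Est's original proof and as the paper indicates at the end of \S\ref{diff}. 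Your Dold--Puppe remark gestures toward this but never supplies it, and it is not a formality: it is the step that distinguishes the (true) isomorphism \eqref{KT} from the (false) collapse to $\Rb$ that a pure contractibility argument yields.
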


On the other hand, by the universal property of Weil algebras, the canonical connection 
on the principal
$G$-bundle $\bar{N}G \to NG$ gives rise to a map of simplicial $\Fg$-DG algebras
\begin{align} \label{varfi}
 \varphi: W_1(\Fg)= \bigoplus_{p \geq 0}W(\Fg^{p+1})  \to 
\bigoplus_{p \geq 0}\Om^*(G^{p+1})= \Om^*(\bar{N}G).
\end{align} 
Shulman and Stasheff~\cite{ShSt} define a compatible filtration $\Fc$ on $ \Om^*(\bar{N}G)$ which 
renders $\varphi$ fitration-preserving, as follows. 
First they endow  $\Om^*(NG)$ with the standard filtration $F$ by the degree 
of forms. Then they define $\Fc$ as the filtration
of $ \Om^*(\bar{NG})$ induced by $\psi:  \Om^*(NG) \to \bar{N}G)$, where $\psi=\{\psi_p\}$
with $\psi_p$ corresponding to the projection $G^{p+1}\to G^p$ on the first $p$ coordinates. 
In the
homogeneous picture $\Om^*({N}G)$ is identified with the subcomplex  $\Om^*(\bar{N}G)_G$ 
of $G$-basic forms on $\bar{N}G$. Note that
$\Fc$ also restricts to a filtration on the $K$-basic forms $\Om^*(\bar{N}G)_K$. 

Both $\varphi$ and $\psi$ descend to maps $\varphi_m$, resp. $\psi_m$, 
at the level of the quotient algebras by the $(m+1)$th power of the corresponding ideal,
which are shown to give rise to equivalences of spectral sequences.

\begin{theorem}[\cite{ShSt}, pp. 68-70] \label{semi-vE} 
Each of the maps  
\begin{align} \label{phim}
& \varphi_m: W_1(\Fg, K)/\Fc_1^{m+1} \to \Om^*(\bar{N}G)_K/\Fc^{m+1} , 
\quad \text{resp.} \\  \label{psim}
&\psi_m: \Om^*(NG)_m \equiv
\Om^*(\bar{N}G)_G /F^{m+1} \to \Om^*(\bar{N}G)_K/\Fc^{m+1} 
\end{align} 
is filtration preserving and induces an isomorphism of spectral sequences.  
 \end{theorem}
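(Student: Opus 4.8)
The plan is to handle both maps through the comparison theorem for spectral sequences: a filtration-preserving map of filtered complexes that induces an isomorphism on some page $E_r$ automatically induces isomorphisms on every later page and, once the filtrations are bounded in each total degree, on the abutment as well. Working with the truncated complexes obtained by dividing out $\Fc^{m+1}$, resp. $F^{m+1}$, makes the induced filtrations finite in each total degree, so convergence is automatic and it suffices to produce an isomorphism at the $E_1$ level for each of $\varphi_m$ and $\psi_m$. Since the source and target of each map are genuinely different complexes, their $E_0$-pages need not agree, and $E_1$ is the first page at which a match can be expected.

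Filtration-preservation is arranged by the construction. For $\psi_m$ the filtration $\Fc$ on $\Om^*(\bar N G)$ is by definition the one induced from the form-degree filtration $F$ through $\psi$, so under the identification $\Om^*(NG)\cong \Om^*(\bar N G)_G$ the inclusion of $G$-basic forms into $K$-basic forms carries $F$ into $\Fc$ and descends to the quotients by the $(m+1)$st powers of the respective ideals. For $\varphi_m$ the filtration $\Fc_1$ on $W_1(\Fg)=\bigoplus_{p\geq 0}W(\Fg^{p+1})$ is the Weil filtration by curvature (symmetric) degree on each summand; because in the Weil algebra the curvature generators sit one filtration step above the connection generators, the Chern--Weil map $\varphi$ attached to the canonical connection \eqref{conn} raises filtration correctly and sends $\Fc_1^{m+1}$ into $\Fc^{m+1}$, so $\varphi_m$ is well defined and filtration-preserving.

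The heart of the argument is the $E_1$-page, and this is exactly where the van Est isomorphism intervenes. Passing to the associated graded replaces the total differential by its leading part $d_0$, and $E_1$ is the $d_0$-cohomology. On the de Rham side this cohomology is computed by the semi-simplicial analogue of Bott's computation \eqref{BvE}, which presents it as a (semi-simplicial) continuous group cohomology with coefficients in the symmetric powers $S^\bullet(\Fg^*)$ supplied by the curvature; the semi-simplicial van Est isomorphism, the analogue of \eqref{vE}, then identifies it with the relative Lie algebra cohomology $H^\bullet(\Fg,K)\otimes S^\bullet(\Fg^*)^G$. On the Weil side the corresponding $d_0$-cohomology of $W_1(\Fg,K)$ is, by the parallel Lie-algebra computation, this same relative Lie algebra cohomology, and $\varphi$ realizes the matching isomorphism; its naturality with respect to the face maps induced by ${\rm pr}_k:\Fg^{p+1}\to\Fg^p$ guarantees compatibility with the horizontal structure, so the level-wise identifications assemble into a single isomorphism of $E_1$-pages. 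For $\psi_m$ the analogous $E_1$-isomorphism is the statement that the inclusion of $G$-basic into $K$-basic forms induces an isomorphism on $E_1$, which reduces level by level to the $\Rb$-acyclicity of the contractible homogeneous space $G/K$ --- the trivial-coefficient shadow of the same van Est principle.

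With an $E_1$-isomorphism secured for both maps, the comparison theorem promotes it to an isomorphism on all higher pages and on the cohomology of the truncated total complexes, which is the assertion of the theorem. The main obstacle is the semi-simplicial van Est step. One must verify not merely that van Est applies on each fixed simplicial level $p$ with the diagonally embedded $K$, but that the resulting identification is natural in $p$, intertwining the simplicial face operators, so that it survives to an isomorphism of the full $E_1$-complexes; and one must check that it remains an isomorphism after the truncation at level $m$ has been imposed on both sides. Controlling this interplay between the van Est map, the simplicial structure, and the filtration truncation is where the genuine work of the proof lies.
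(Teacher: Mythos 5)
Your proposal takes essentially the same route as the paper: the paper does not reprove Theorem \ref{semi-vE} but quotes it from \cite{ShSt}, and it describes that argument exactly as you reconstruct it --- the filtrations are arranged so that $\varphi_m$ and $\psi_m$ are filtration-preserving by construction, the $E_1$-comparison is a semi-simplicial generalization of the van Est isomorphism (indeed, the paper notes that at the level of $0$-th filtration $\varphi_0$ and $\psi_0$ are van Est's own maps), and the conclusion follows from the standard comparison theorem for spectral sequences. Your closing caveat correctly locates where the genuine work lies (naturality of the van Est identification in the simplicial direction and its compatibility with the truncation), which is precisely the content deferred to \cite{ShSt}, pp.~68--70.
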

 
One thus obtains in homology the isomorphisms 
\begin{align} \label{WK}
\varphi_m^\natural : H^*\left(W_1(\Fg, K)_m \right) &\xrightarrow{\cong} 
H^*\left(\Om^*(\bar{N}/K)_m\right) , \quad \fl \, m \geq -1\\ \label{GtoK}
\psi_m^\natural : H^*\left(\Om^*(NG)_m\right) &\xrightarrow{\cong} 
H^*\left(\Om^*(\bar{N}/K)_m\right),
\end{align}
which combined with  \eqref{KT} give rise to a canonical isomorphism 
\begin{align} \label{nuHH} 
 H^*\left((\Om^*(NG)_m\right) & \cong H^*\bigl(W(\Fg, K)_m\bigr) .
\end{align}
The above isomorphism coincides with that of  \eqref{WH}, as both are
constructed via the universal property of the Weil algebra in Chern-Weil theory.
The difference is that the cross-section $s$ in \eqref{mu} is replaced by the equivalence
\eqref{psim} of spectral sequences, and the passage through the Dupont complex is 
altogether bypassed.

In order to see that Theorem \ref{semi-vE} represents
a semi-simplicial generalization of the van Est isomorphism 
$H^*(\Fg, K; \Rb) \cong H_{\rm cont}^*(G; \Rb)$, note that
at the level of $0$-th filtration,
$W_1(\Fg, K)/\Fc_1=  \bigoplus_{p \geq 0}\left(\wedge^* \Fg^{p+1}\right)_{K-{\rm basic}}$
while $\Om^0(NG) = \bigoplus_{p \geq 0}C^\infty(G^p)$ is the complex of 
differentiable group cohomology, and
$\varphi_0$ and $\psi_0$ are the very same maps used by van Est in his proof~\cite{vEst}.

 \
 
\subsection{Hopf algebra of algebraic forms} \label{salg} 
Regarding $G=\GL(n, \Rb)$ as a real algebraic group, we denote by 
$\Om^*_{\rm alg}(G)$ its graded Hopf algebra of algebraic differential forms. As vector
spaces $\Om^*_{\rm alg}(G) \cong C_{\rm alg}(G)~\ot~\bigwedge \Fg^*$,
where $C_{\rm alg}(G) = \Rb [g_{ij} ; \det g^{-1}] $ is the ring of regular rational functions 
on $G$, which is the classical Hopf algebra of a linear algebraic group. The associated 
Hochschild complex of the DG Hopf algebra $\Om^*_{\rm alg}(G)$ coincides with
the Bott simplicial de Rham $(\Om_{\rm alg}^{*,*}(NG), \d, d)$, where 
$\Om_{\rm alg}^{p,q}(NG):=\Om_{\rm alg}^q(G^p)$. 
  
\begin{thm} \label{alg}
There are canonical isomorphisms 
\begin{align}   \label{algHH}   
HH^q\left((\Om_{\rm alg}^*G)_n\right) &\xrightarrow{\cong} 
H^*\bigl(W(\Fg, G)_n\bigr) , \\ \label{algHC} 
HC^q\left((\Om_{\rm alg}^*G)_n\right)& \xrightarrow{\cong} 
 \bigoplus_{i \geq 0}H^{q-2i}\bigl(W(\Fg, G)_n\bigr).
\end{align}
\end{thm}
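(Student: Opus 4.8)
The plan is to transport the spectral-sequence argument of \S\ref{diff} into the algebraic category, the one decisive new ingredient being that $G=\GL(n,\Rb)$, regarded as a real algebraic group, is \emph{linearly reductive}: in characteristic zero every rational $G$-module $V$ is semisimple, so its rational (Hochschild) group cohomology satisfies $H^0_{\rm alg}(G;V)=V^G$ and $H^p_{\rm alg}(G;V)=0$ for $p>0$. This vanishing is exactly what compactness of $K$ supplied in the smooth theory, and it yields the algebraic avatar of the van Est isomorphism \eqref{vE}, namely
\[
H^*_{\rm alg}\bigl(G;S^*(\Fg^*)\bigr)\;\cong\;H^*(\Fg,G;\Rb)\ot S^*(\Fg^*)^G\;=\;S^*(\Fg^*)^G ,
\]
in which the relative Lie algebra cohomology $H^*(\Fg,G;\Rb)=\Rb$ is concentrated in degree zero. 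Thus, in contrast with the $K$-relative case where \eqref{wo} retains the exterior generators $u_{2i-1}$, here no such generators survive and the entire invariant ring $S^*(\Fg^*)^G=P[c_1,\dots,c_n]$ is kept.

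Concretely, I would first use the identification (stated just before the theorem) of the Hochschild complex of $\Om_{\rm alg}^*(G)$ with the algebraic Bott bicomplex $(\Om_{\rm alg}^{*,*}(NG),\d,d)$ and filter it by the degree of the forms. The first task is to establish the algebraic Decomposition Lemma replacing \eqref{BvE},
\[
H^p_\d\bigl(\Om_{\rm alg}^q(NG)\bigr)\;\cong\;H^{p-q}_{\rm alg}\bigl(G;S^q(\Fg^*)\bigr) .
\]
Since $\Om_{\rm alg}^q(G)\cong C_{\rm alg}(G)\ot\wedge^q\Fg^*$ with $\wedge^q\Fg^*$ the left-invariant forms, the column $\{\Om_{\rm alg}^q(G^p)\}_p$ is the rational group-cohomology complex of $G$, and Bott's Dold--Puppe decomposition of the simplicial de Rham complex produces the stated degree shift. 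Combining this with the reductivity-vanishing above collapses the $E_1$-term onto the diagonal $p=q$, giving $E_1\cong\bigl(S^*(\Fg^*)^G\bigr)_n=P_n[c_1,\dots,c_n]$. Being concentrated on the diagonal, the spectral sequence degenerates at $E_1$, and the edge homomorphism --- the Chern--Weil map $W(\Fg,G)_n\to(\Om_{\rm alg}^*NG)_n$ sending invariant polynomials to their Bott--Shulman--Stasheff representatives --- induces the isomorphism \eqref{algHH}; since $H^*(W(\Fg,G)_n)=\bigl(S^*(\Fg^*)^G\bigr)_n$ carries the zero differential, this also identifies the target. Alternatively one can run the route of Theorem \ref{semi-vE} with the algebraic semi-simplicial Weil algebra $W_1(\Fg,G)$ and the algebraic forms $\Om_{\rm alg}^*(\bar N G)$, the reductivity input entering at the same place.

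For the cyclic statement \eqref{algHC} I would follow Gorokhovsky's argument verbatim. The canonical connection and the transgression cochains built from it are invariant under the cyclic operator, so the Hochschild classes in the image of the Chern--Weil map are represented by cyclic cocycles. Consequently Connes' $SBI$ periodicity sequence splits into short exact sequences
\[
0\to HC^{q-2}\bigl((\Om_{\rm alg}^*G)_n\bigr)\xrightarrow{S} HC^{q}\bigl((\Om_{\rm alg}^*G)_n\bigr)\xrightarrow{I} HH^{q}\bigl((\Om_{\rm alg}^*G)_n\bigr)\to 0 ,
\]
whence $HC^q\cong\bigoplus_{i\geq0}HH^{q-2i}\cong\bigoplus_{i\geq0}H^{q-2i}(W(\Fg,G)_n)$, as claimed.

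The hard part will be the algebraic Decomposition Lemma: Bott's proof of \eqref{BvE} rests on the Dold--Puppe theory for nonadditive functors applied to the \emph{smooth} de Rham complex, and one must verify that it survives the restriction to regular rational functions, where partitions of unity and smoothing are unavailable. A secondary point is that the canonical connection \eqref{conn} is not algebraic in the group variables, as it involves the barycentric coordinates on $\D^p$; hence the Chern--Weil edge map must either be produced from explicit algebraic group cocycles representing the $c_i$, or read off intrinsically from the degenerate spectral sequence rather than through Dupont's integration over simplices.
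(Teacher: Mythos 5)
Your proposal is correct and takes essentially the same approach as the paper: the column-filtration spectral sequence of the algebraic Bott bicomplex, the algebraic Decomposition Lemma, the vanishing of higher rational cohomology of the reductive group $G$ (which the paper deduces from Hochschild's theorem \cite{Hoch} rather than invoking linear reductivity directly --- the same fact), collapse of the $E_1$-term onto $P_n[c_1,\dots,c_n]$, and the split periodicity sequence yielding \eqref{algHC} from \eqref{algHH}. The paper also records the same Shulman--Stasheff alternative you sketch, handling the algebraic semi-simplicial van Est bicomplex as in \cite{Kumar}, and, like you, it does not route the argument through Dupont's simplicial connection.
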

\begin{proof} 
The short proof is parallel to Bott's argument in the case of a compact Lie group
(\cf  ~\cite[Remark (a)]{Bott-ChWe}). Indeed, by the analogue of Bott's Decomposition Lemma,
the $E_1$-term of the spectral sequence for
the column filtration of the truncated complex $(\Om_{\rm alg}^{*,*}(NG)_n, \d, d)$ is
seen to be
\begin{align} \label{BoHo1}
E_1^{p,q} = H_\d^p \bigl(\Om_{\rm alg}^q(G^p)_n\bigr) \cong 
H_{\rm alg}^{p-q}\bigl(G; S^q(\Fg^*)_n\bigr) .
\end{align} 
The algebraic counterpart of the van Est isomorphism, which is a consequence of
Hochschild's isomorphism~\cite[Theorem 5.2]{Hoch}, then implies:  
\begin{align} \label{BoHo2}
\begin{split}
H_{\rm alg}^{0}\bigl(G; S^q(\Fg^*)_n\bigr) &\cong S^q(\Fg^*)_n^G, \quad \text{and} \\ 
 H_{\rm alg}^{i}\bigl(G; S^q(\Fg^*)_n\bigr)  &= 0 ,   \qquad \text{if} \quad i>0 .
 \end{split}
\end{align}
From \eqref{BoHo1} and \eqref{BoHo2} it follows that  
 \begin{align*}
 H^*(\Om_{\rm alg}^*(NG)_n\bigr)
\cong S^*(\Fg^*)_n=P_n [c_1, c_2, \ldots, c_n] .
 \end{align*} 
The left hand side is the same as $HH^*\left(\left(\Om_{\rm alg}^*G)_n\right)\right)$
 while the right hand is the same as $H^*\bigl(W(\Fg, G)_n\bigr)$.  
 
 The Shulman-Stasheff argument in \S \ref{diff} can also be adapted. The
 algebraic counterpart of $\psi_m$ in Theorem \ref{semi-vE} is $\psi_m^{\rm alg} = \Id$,
and the proof that  
 $$ 
 \varphi_m^{\rm alg}: W_1(\Fg, G)/\Fc_1^{m+1}  \to \Om_{\rm alg}^*(\bar{N}G)_G/\Fc^{m+1} \equiv
 \Om_{\rm alg}^*(NG)_m
 $$
 induces an isomorphism of spectral sequences follows along the same lines as in~\cite{ShSt},
 with the difference that the semi-simplicial generalization of the van Est bicomplex~\cite[\S 10]{vEst0}
 is replaced by its algebraic counterpart, which can be handled as in the proof of
  \cite[\S 2]{Kumar} for the algebraic version of the van Est isomorphism.
 
 We finally recall that \eqref{algHC} automatically follows from  \eqref{algHH}.
 \end{proof}
 
 \subsection{Hopf algebra of germs of forms} \label{sgerm}
We now consider a Hopf algebra of forms on the {\em group germ} determined by $G$, 
which is defined as follows.
First, a differential form $\a \in \Om^q(G)$ will be called {\em locally trivial} if it vanishes identically
in a neighborhood of $\1b :=\Id$ in $G$. We denote the set of such forms by $\Om_0^*(G)$. 
More generally, for any $p \in \Nb$, we define in a similar way the set $\Om_0^*(G^p)$ 
of locally trivial forms for the group $G^p$. Obviously, 
$\Om_0^*(G^p)$ is an ideal in $\Om^*(G^p)$, and we form the quotient algebra
$$
\bar{\Om}^*(G^p) :=  \Om^*(G^p)/\Om_0^*(G^p) , \quad  \fl p\in \Nb . 
$$
We then define the tensor product $\bot$ of two such algebras by stipulating
$$
\bar{\Om}^*(G^p) \bot \bar{\Om}^*(G^q) := \bar{\Om}^*(G^{p+q}),
$$
and equip $\bar{\Om}^*(G)$ with the coproduct 
$$
\bar\D: \bar{\Om}^*(G) \to \bar{\Om}^*(G) \bot \bar{\Om}^*(G)
$$
induced by $\D: \Om^*(G) \to \Om^*(G) \ot \Om^*(G)$. The latter is well-defined since
if $\a \in \Om_0^*(G)$ then $\D\a \in \Om_0^*(G^2)$.  
With this coproduct and the obvious antipode, unit and counit, 
it is straightforward to verify that $\bar{\Om}^*(G)$ satisfies the axioms of
a DG Hopf algebra. Its Hochschild complex coincides with 
the simplicial de Rham bicomplex $\{\bar{\Om}^{*,*}(NG), \d, d\}$, 
resp. $\{\bar{\Om}^{*,*}(\bar{N}G), \d, d\}$, where 
$\bar{\Om}^{p,q}(NG):=\bar{\Om}^q(G^p)$ and 
$ \bar{\Om}^{p,q}(\bar{N}G):=\bar{\Om}^q(G^{p+1})$.

\begin{thm} \label{germ}
There are canonical isomorphisms 
\begin{align}   \label{germHH}     
 HH^q\left((\bar{\Om}^*G)_n\right)&\xrightarrow{\cong} 
 H^*\bigl(W(\Fg)_n\bigr) , \\ \label{germHC} 
HC^q\left((\bar{\Om}^*G)_n\right)& \xrightarrow{\cong} 
 \bigoplus_{i \geq 0}H^{q-2i}\bigl(W(\Fg)_n\bigr).
\end{align}
\end{thm}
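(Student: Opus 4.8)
The plan is to run the same machinery used for Theorem~\ref{Gor-Iso} and Theorem~\ref{alg}, but with the compact-relative van Est isomorphism of \S\ref{diff} replaced by its purely local, \emph{germ}-theoretic counterpart. Since the Hochschild complex of $(\bar\Om^*G, d)$ is the germ simplicial de Rham bicomplex $\{\bar\Om^{*,*}(NG), \d, d\}$, I would filter the truncated complex $(\bar\Om^{*,*}(NG)_n, \d, d)$ by the degree of the forms and identify its $E_1$-term. By the germ analogue of Bott's Decomposition Lemma I expect
$$
E_1^{p,q} = H_\d^p\bigl(\bar\Om^q(G^p)_n\bigr) \cong H^{p-q}_{\rm germ}\bigl(G; S^q(\Fg^*)_n\bigr),
$$
where $H^*_{\rm germ}$ denotes the cohomology computed from germs at $\1b$ of differentiable cochains on the faces $N_pG = G^p$; this is the exact germ analogue of \eqref{BvEn} and of \eqref{BoHo1}.

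The decisive ingredient is the germ form of the van Est isomorphism. Germs localize everything to an arbitrarily small, hence contractible, neighbourhood of the identity, on which $\exp$ identifies $G$ with a neighbourhood of $0$ in $\Fg$; there is therefore no maximal compact subgroup to divide out and no global invariants to extract. Accordingly I expect the germ van Est map to yield the \emph{full}, non-relative Lie algebra cohomology,
$$
H^*_{\rm germ}\bigl(G; M\bigr) \cong H^*\bigl(\Fg; M\bigr),
$$
for each coefficient module $M = S^q(\Fg^*)_n$ occurring above, so that $E_1^{p,q} \cong H^{p-q}(\Fg; S^q(\Fg^*)_n)$. This is precisely the $E_1$-term of the standard filtration by symmetric degree on the truncated Weil algebra $W(\Fg)_n$, whose $E_0$-differential is the Chevalley--Eilenberg differential with coefficients in $S^*(\Fg^*)$ under the coadjoint action. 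Matching the two pages along the Chern--Weil map induced by the canonical connection \eqref{conn}, and invoking the comparison theorem for spectral sequences, then gives the Hochschild isomorphism \eqref{germHH}. In contrast with Theorem~\ref{alg}, where Hochschild's vanishing \eqref{BoHo2} collapsed the sequence onto its diagonal, here the higher Lie algebra cohomology survives and the argument proceeds genuinely by comparison rather than by degeneration.

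As in \S\ref{diff}, I would corroborate this through the Shulman--Stasheff argument: the canonical connection on $\bar{N}G \to NG$ induces a map of simplicial $\Fg$-DG algebras from the \emph{full} semi-simplicial Weil algebra $W_1(\Fg)$, with no basic restriction imposed, into $\bar\Om^*(\bar{N}G)$, and the germ van Est bicomplex, obtained by localizing at $\1b$ the bicomplex of \cite[\S 10]{vEst0}, should render the truncated map $\varphi_m^{\rm germ}: W_1(\Fg)/\Fc_1^{m+1} \to \bar\Om^*(\bar{N}G)/\Fc^{m+1}$ an equivalence of spectral sequences; the germ analogue of $\psi_m$ degenerates to the identity, since no passage to $K$-basic forms is needed. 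Combined with the germ analogue of \eqref{KT}, this reproduces \eqref{germHH}. The cyclic statement \eqref{germHC} then follows formally: the connection \eqref{conn} and the cross-section are invariant under the cyclic action, so the representing Hochschild cocycles are cyclic, Connes' periodicity sequence splits into short exact sequences, and \eqref{germHC} is deduced from \eqref{germHH} exactly as \eqref{muHC} was deduced from \eqref{muHH}.

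The step I expect to be the main obstacle is the rigorous justification of the germ van Est isomorphism, and especially of its semi-simplicial (van Est bicomplex) form, in the germ category: one must verify that localizing the van Est double complex at the identity preserves the equivalence of spectral sequences, implementing the local contractibility that makes the compact subgroup disappear at the level of germs of cochains, compatibly with the simplicial faces and with the truncation by form degree. A secondary point needing care is the germ Decomposition Lemma itself, namely that germs of forms on $G^p$ decompose along the fibres as in Bott's original argument and that this decomposition respects the cyclic structure, so that the Hochschild-to-cyclic passage carries over verbatim.
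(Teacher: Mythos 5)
Your main argument is the paper's own: filter $(\bar{\Om}^{*,*}(NG)_n,\d,d)$ by form degree, identify the $E_1$-term via the germ analogue of Bott's Decomposition Lemma as the group-germ cohomology $H_{\square}^{p-q}(G;S^q(\Fg^*)_n)$, convert it into full (non-relative) Lie algebra cohomology $H^{p-q}(\Fg;S^q(\Fg^*)_n)$ by the germ version of the van Est isomorphism, recognize this as the $E_1$-page of the symmetric-degree filtration of $W(\Fg)_n$ (so that $E_1\cong W_n$, with $d_1u_i=c_i$), and conclude by spectral sequence comparison, the cyclic statement \eqref{germHC} then following from \eqref{germHH} exactly as in Theorem \ref{Gor-Iso}. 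The step you single out as the ``main obstacle,'' the germ van Est isomorphism, is not something that needs to be proved from scratch: it is a known theorem of \'Swierczkowski \cite[Theorem 2]{Sw}, and the cohomology of group germs (your $H^*_{\rm germ}$, the paper's $H^*_{\square}$) is defined there as well; the paper simply cites it. With that citation in place your first argument is complete and coincides with the paper's.

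Your secondary, Shulman--Stasheff style argument contains a concrete error: you assert that the germ analogue of $\psi_m$ ``degenerates to the identity, since no passage to $K$-basic forms is needed.'' That degeneration is what happens in the \emph{algebraic} case (Theorem \ref{alg}), where the relative subgroup is $G$ itself, so that source and target of $\psi_m^{\rm alg}$ are both the $G$-basic forms. In the germ case the compact subgroup is replaced by the \emph{trivial} group, so the common target of the two maps is $\bar{\Om}^*(\bar{N}G)/\Fc^{m+1}$, all germ forms with no basic condition, while the source of $\psi_m^{\rm germ}$ is $\bar{\Om}^*(NG)_m$, i.e.\ the $G$-basic (homogeneous) subcomplex. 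Thus $\psi_m^{\rm germ}:\bar{\Om}^*(NG)_m\to\bar{\Om}^*(\bar{N}G)/\Fc^{m+1}$ is a genuinely nontrivial map, and proving that it, together with $\varphi_m^{\rm germ}:W_1(\Fg)/\Fc_1^{m+1}\to\bar{\Om}^*(\bar{N}G)/\Fc^{m+1}$, induces an isomorphism of spectral sequences is precisely where the van Est-type content of the theorem resides; the paper handles it by adapting the van Est bicomplex argument of \cite[\S 10]{vEst0} with the modifications \'Swierczkowski uses in \cite[\S 7]{Sw}. Declaring $\psi_m^{\rm germ}=\Id$ silently identifies germ forms on $NG$ with germ forms on $\bar{N}G$, i.e.\ suppresses exactly the homogeneous-versus-inhomogeneous comparison the argument is supposed to establish. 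This error does not affect your first argument, which stands as the proof of \eqref{germHH} and \eqref{germHC}.
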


\begin{proof} 
The analogue of Bott's Decomposition Lemma yields
 \begin{align*}
E^{p,q}_1 =  H_\d^p \left(\bar{\Om}^q (G^p)_n\right) \cong 
 H_{\square}^{p-q} \left(G; S^q(\Fg^*)_n\right) ,
 \end{align*}
where $H_\square^*$ refers to the cohomology of group germs,
which by its very definition  (see~\cite[\S 4]{Sw}) in the case at hand is the cohomology of
the complex $\{\bar{\Om}^0 \left(G^p ; S^q( \Fg^*)_n\right),  \d \}$. 
Furthermore,  the counterpart of van Est's isomorphism
for this case was proved by \'Swierczkowski~\cite[Theorem 2]{Sw} and it gives
the isomomorphism
 \begin{align} \label{sw}
H_{\square}^{i} \left(G; S^q( \Fg^*)_n\right) \cong 
 H^{i} \left(\Fg; S^q( \Fg^*)_n\right). 
 \end{align}
 Since $\, H^{\ast} \left(\Fg; S^q(\Fg^*)_n\right) \cong
 H^{\ast} (\Fg; \Rb) \ot S^q( \Fg^*)_n^{G}$, it follows that 
  \begin{align*}
 H^{\ast} (\Fg; \Rb) \ot S( \Fg^*)_n^{G}  
 \cong E [h_1, h_2, \ldots , h_n] \ot P_n [c_1, c_2, \ldots, c_n]
 \end{align*}  
After identifying the first differential as being
 $d_1 u_i=c_i$ for $1\leq i \leq n$, \ie showing that $E_1\cong W_n$, 
 one can continues as in the first proof in \S \ref{diff}. 
 
 Again, the more satisfying argument follows the pattern of the second proof in \S \ref{diff}, 
 with the maps
 $\varphi_m$ and $\psi_m$ replaced by \, 
  $ \varphi_m^{\rm germ}: W_1(\Fg)/\Fc_1^{m+1}  \to \bar{\Om}^*(\bar{N}G)/\Fc^{m+1}$
 and 
 $ \psi_m^{\rm germ}: \bar{\Om}^*(NG)_m \to \bar{\Om}^*(\bar{N}G)/\Fc^{m+1}$.
 The proof that these maps induce isomorphisms of spectral sequences goes
 along the same lines as in
 \cite{ShSt}, with the necessary modifications similar to those used by \'Swierczkowski
to adapt van Est's proof for the bicomplex in~\cite[\S 10]{vEst0} to the bicomplex 
in~\cite[\S 7]{Sw}.
 \end{proof}
 
\subsection{Examples of representative cocycles} \label{fin}
 First we point out that, similarly to the realization of $\Hc_n= \Hc(n, \Rb)$ 
 as a Hopf algebra of `moving frames', $\Om^*((\GL(n, \Rb))$ has a natural representation
 as a Hopf algebra of `moving coframes'. This representation is given by the coaction of 
$G =\GL(n, \Rb)$  on the \'etale groupoid $\G_n= \Diff_n \ltimes \Rb^n$ determined by 
the first jet map (analogous to the local coaction of $\GL(n, \Rb)$ on the $C^*$-algebra 
of a codimension $n$ foliation, mentioned by Connes in~\cite[III 7 $\a$]{book}). Specifically,
 \cf \cite[\S 5]{Gor}, the formula 
 \begin{align} \label{rho}
\rho(\a) (\om U_\vp) := J^*(\a)\om U_\vp , \quad \a \in \Om^*(G), \quad \om \in \Om_c^*(\G_n) ,
\end{align}
where $J:\G_n \to \GL(n, \Rb)$ is the Jacobian map 
\begin{align*}
J(\vp, x) =  \vp^{\prime}(x) = \left(\part_{j} \vp^{i}  (x)\right) \in  \GL(n, \Rb) ,
\end{align*}
defines an action  $\rho : \Om^*(G) \times \Om_c^*(\G_n) \to  \Om_c^*(\G_n) $.

Parallel to the case of the action of $\Hc_n$ on $\G_n$, 
there is a closed graded trace $\cutint: \Om_c^*(\G_n) \to \Cb$, given by
\begin{align*}  
\cutint \om U_\vp = \int \om ,  \quad  \text{if} \,\, \vp=\1b , \qquad 
 \cutint f U_\vp = 0 , \quad  \text{if} \,\, \vp \neq \1b ,
\end{align*}
which is $\Om^*(G)$-invariant, \ie satisfies
\begin{align*}  
\cutint (d\om) U_\vp = 0 ,  \quad  \text{and} \qquad 
 \cutint \rho(\a) (\om U_\vp)= \epsilon(\a)\cutint \om U_\vp .
\end{align*}
With these at hand, one can define (see~\cite[(3.11)]{Gor})
a characteristic map of cyclic modules
$\kappa_*: \Om^*(G)^\natural \to \Om_c^*(\G_n)^\natural$ by setting
\begin{align}  \label{kappa}
 \kappa_q (\a^1 \ot \ldots \ot \a^{q})(a_0, a_1, \ldots, a_q) := 
(-1)^{\rm d} \cutint a_0 \rho(\a^1)(a_1) \cdots \rho(\a^q)(a_q) , 
 \end{align}
where $\a^i \in \Om^*(G)$, $a_j \in  \Om_c^*(\G_n)$, and  ${\rm d}:=\sum_{i >j} \deg \a^i \deg a_j$.   

In order to write down examples of Hopf cyclic classes in $HC^*((\Om^*(G))_n)$, 
let $\theta \in \Om^*(G)\ot \Fg^*$ denote the Maurer-Cartan form 
$\theta_g = g^{-1}dg$, and define $\log|\det| \in \Om^0(G)$ by
$\log|\det| (g):= \log |\det g|$. Then
\begin{align*}  
gv_{1, q} := \log|\det| \ot \Tr \left(\theta^{\ot q}\right) \in \Om^q(G^{q+1})
\end{align*}  
is a cocycle representing in $HC^*((\Om^*(G))_n)$ the Godbillon-Vey class corresponding
to $u_1c_1^q \in H^{2q+1}(W(\Fg, G)_n)$. More generally, cocycles representing
generalized Godbillon-Vey classes in $HC^*((\Om^*(G))_n)$ can be obtained as suitable linear
combinations of terms of the form
\begin{align*}  
 \log|\det| \ot \Tr \left(\theta^{\ot \ell_1}\right) \ot \Tr \left(\theta^{\ot \ell_2}\right) \ot \cdots
\ot \Tr \left(\theta^{\ot \ell_k}\right) \in \Om^q(G^{q+1})
\end{align*}
where $\lb = (\ell_1, \ldots, \ell_k)$ runs over the partitions of $q$. Carried through
the characteristic map \eqref{kappa}, these cocycles acquire expressions reminiscent of
those obtained for the same classes by Bott~\cite{Bott-gv} in group cohomology, and by
Crainic and Moerdijk~\cite[\S 5.1]{CraiM} in their \v{C}ech-de Rham theory for leaf spaces. 

The Chern classes in both
$HC^*((\Om^*(G))_n)$ and $HC^*((\Om_{\rm alg}^*(G))_n)$ can also be represented by
cocycles of a similar form only without the transcendental factor $\log|\det|$.

 

\begin{thebibliography}{9}
 
 
 \bibitem{Bott-ChWe}
 Bott, R., {On the Chern-Weil homomorphism and the continuous cohomology of Lie groups},
 \textit{Adv. Math.}, {\bf 11} (1973), 289--303.
 
  \bibitem{Bott-gv}
  Bott, R., On the characteristic classes of groups of diffeomorphisms, 
 {\it  L'Enseignement Math\'ematique} \textbf{23} (1977), 209-220. 

 \bibitem{BSS}
 Bott, R., Shulman, H. and Stasheff, J.,  {On the de Rham theory of certain classifying
 spaces}, {\it Adv. Math.} \textbf{20}  (1976), 43--56.
 
\bibitem{Cartan}
Cartan, H., Notions d'alg\`ebre diff\'erentielle. 
In \textit{Colloque de Topologie Alg\`ebrique}, pp. 16--27, 57--71, Bruxelles, 1950.
 
  
\bibitem{ncdg}
Connes, A.,  Noncommutative differential geometry,
\textit{Inst. Hautes Etudes Sci. Publ. Math.} \textbf{62} (1985), 257-360.

\bibitem{book} 
Connes, A., {\bf Noncommutative geometry}, Academic Press, 1994.

\bibitem{CM95} 
Connes, A. and Moscovici, H., 
 The local index formula in noncommutative
geometry, {\it Geom. Funct. Anal.}, {\bf 5} (1995), 174-243.
 
\bibitem{CM98} 
Connes, A. and Moscovici, H., {Hopf algebras, cyclic
cohomology and the transverse index theorem}, \textit{Commun. Math.
Phys.} 198 (1998), 199-246.

\bibitem{CM99} 
Connes, A. and Moscovici, H., {Cyclic cohomology and Hopf
algebras},
 \textit{Lett. Math. Phys.}  {\bf 48}  (1999),  no. 1, 97--108.

 
\bibitem{CM-big}
Connes, A. and Moscovici, H., 
 Background independent geometry and Hopf cyclic cohomology,
 \textit{arXiv}:math/0505475.

\bibitem{CraiM}
Crainic, M. and Moerdijk, I., \v{C}ech-de Rham theory for leaf spaces of folliations,
 \textit{Math. Ann.}  {\bf 328}  (2004), 59-85.

 \bibitem{Dupont} 
 Dupont, J. L., {Simplicial de {R}ham cohomology and characteristic classes of
              flat bundles}.
 \textit{Topology}  {\bf 15}  (1976),  233--245.
 
 \bibitem{vEst0} van Est, W. T., {Group cohomology and Lie algebra cohomology in Lie groups I, II},
 \textit{Indag. Math.}  {\bf 15}  (1953),  484--504.
 
 \bibitem{vEst} van Est, W. T., {Une application d'une m\'ethode de {C}artan-{L}eray}.
 \textit{Indag. Math.}  {\bf 17}  (1955),  542--544.

 
 \bibitem{GF} Gelfand, I. M. and Fuchs, D. B., Cohomology of the
Lie algebra of formal vector fields, \textit{Izv. Akad. Nauk SSSR}
\textbf{34} (1970), 322--337.


\bibitem{Godbillon}
Godbillon, C., Cohomologies d'alg\`ebres de Lie de champs de vecteurs formels,
\textit{S\'eminaire N. Bourbaki}, 1972-1973, exp. 421, 69-87.

\bibitem{Gor}
Gorokhovsky, A.,  Secondary characteristic classes and cyclic cohomology of Hopf algebras,
Topology \textbf{41} (2002), 993--1016.

  
\bibitem{hkrs}
Hajac, P. M., Khalkhali, M., Rangipour, B. and Sommerh\"auser Y.,
{Stable anti-Yetter-Drinfeld modules.}   
{Hopf-cyclic homology and cohomology with coefficients.} \textit{C. R.
Math. Acad. Sci. Paris}  \textbf{ 338}  (2004), 587--590 and 667--672.

\bibitem{Hoch}
Hochschild, G., Cohomology of algebraic linear groups. \textit{Illinois J. Math.}   \textbf{ 5} 
(1961), 492--519.

\bibitem{KT}Kamber, F., Tondeur Ph., Foliated bundles and characteristic classes, 
Lecture Notes in Mathematics, vol. {\bf 493}, Springer-Verlag, Berlin, 1975. 

\bibitem{KT2}Kamber, F., Tondeur Ph., Semi-simplicial Weil algebras and characteristic classes,
{\it T\^ohoku Math. Journ.}, {\bf 30} (1978), 373--422.
 
\bibitem{Kumar} 
Kumar, S. and  Neeb, K-H.,
Extension of algebraic groups,   {\bf Studies in Lie theory}, p. 365--376,
Progr. Math. {\bf 243}, Birkh\"auser Boston, Boston, MA.


\bibitem{Mos2015}
Moscovici, H., Geometric construction of Hopf cyclic characteristic classes,
 {\it Adv. Math.} \textbf{ 274}  (2015), 651--680.

\bibitem{Mos2015b}
Moscovici, H., Equivariant Chern classes in Hopf cyclic cohomology,
 {\it Bull. Math. Soc. Sci. Math. Roumanie} \textbf{ 106}  (2015), 317--330.

 
\bibitem{MR09} 
Moscovici, H., Rangipour, B.,  Hopf algebras of primitive Lie pseudogroups and 
Hopf cyclic cohomology. {\it Adv. Math.} \textbf{ 220}  (2009), 706--790.

\bibitem{MR11} 
Moscovici, H., Rangipour, B., Hopf cyclic cohomology
and transverse characteristic classes.  {\it Adv. Math.} \textbf{ 227}  (2011), 654--729.

\bibitem{Sw}
\'Swierczkowski, Cohomology of group germs and Lie algebras. {\it Pacific J. Math.}
\textbf{39} (1971), 471--482

 \bibitem{ShSt}
Shulman, H. and Stasheff, J.,  {De Rham theory for $B\Gamma$}. In: P. A. Schweitzer (Ed.),
{\em Differential Topology, Foliations and Gelfand-Fuks Cohomology}, Pontificia Univ. 
Cat\'olica, Rio de Janeiro, 1976, 62--74.



\end{thebibliography}
  \end{document}